\newcommand{\Z}{\mathbb Z}
\newcommand{\Q}{\mathbb Q}
\newcommand{\F}{\mathbb F}
\newcommand{\ol}[1]{\overline{#1}}
\newcommand{\set}[2]{\left\{#1\,\middle|\,#2\right\}}
\newcommand{\sm}{\mathrm{sm}}
\newcommand{\smooth}{(\,\cdot\,)^\sm} 
\newcommand{\abs}{\mathrm{abs}}
\newcommand{\cont}{\mathrm{cont}}
\newcommand{\RR}{\mathbf{R}}
\newcommand{\ie}{\textit{i.e.}, }
\DeclareMathOperator{\Ext}{Ext}
\DeclareMathOperator{\GL}{GL}
\DeclareMathOperator{\ind}{ind}
\DeclareMathOperator{\Ker}{Ker}
\DeclareMathOperator{\Rep}{Rep}
\newtheorem{thm}{Theorem}
\newtheorem{cor}[thm]{Corollary}
\newtheorem{lem}[thm]{Lemma}
\theoremstyle{definition}
\newtheorem*{defn*}{Definition}
\theoremstyle{remark}
\newtheorem{ex}[thm]{Example}
\newtheorem{rmk*}{Remark}
\title{On the Smooth Part Functor}
\author{Claudius Heyer}
\address{Mathematisches Institut, Westf\"alische Wilhelms-Universit\"at
M\"unster, Einsteinstra\ss{}e 62, D-48149 M\"unster, Germany}
\email{cheyer@uni-muenster.de}
\subjclass[2020]{11E95, 12G05, 20G99}
\begin{document}
\begin{abstract} 
Let $G$ be a compact $p$-adic analytic group and $k$ a field positive
characteristic. We prove that for every smooth representation of $G$ on
a $k$-vector space $V$, every 1-cocycle $G\to V$ is continuous. We deduce that
the first derived functor of the smooth part functor vanishes on smooth
representations. As a corollary, we obtain that extensions of smooth
representations are automatically smooth.
\end{abstract} 
\maketitle

\section{Main Result} 
Let $k$ be a commutative ring of characteristic $m>0$. Given a locally profinite
group $G$, we denote by $\Rep_k(G)$ the category of representations of
$G$ on $k$-modules. Let $\Rep_k^{\sm}(G)$ be the full subcategory of $\Rep_k(G)$
consisting of the \emph{smooth} representations, that is, those representations
$V$ of $G$ for which the action map $G\times V\to V$ is continuous if we endow
$V$ with the discrete topology. 

Assume now that $G$ is a compact $p$-adic analytic group. We can view $G$ either
as an abstract group or as a topological group with the profinite
topology. Accordingly, there are two cohomology theories associated with $G$
taking values
in $\Rep_k^\sm(G)$: We denote by $H^q_{\abs}(G,V)$ the $q$-th cohomology group
of $G$  when we view $G$ as an abstract group. When $G$ is viewed as a
topological group we denote by $H^q_{\cont}(G,V)$ the $q$-th continuous (or
profinite, or Galois) cohomology group.
There is an obvious comparison homomorphism
\[
\varphi^q\colon H^q_{\cont}(G,V) \longrightarrow H^q_{\abs}(G,V),\qquad
\text{for $q\ge0$,}
\]
and it is a general question when this is an isomorphism. When $G$ is a pro-$p$
group and $V=\F_p$, this question was raised and studied in
\cite{Comparison.2007}. 

The reason why we restrict to $p$-adic analytic groups and coefficient rings of
positive characteristic is the following example communicated to me by
E.~Bodon: 
\begin{ex}\label{ex} 
Let $k$ be a non-archimedean local field, that is, either a finite
extension of $\Q_p$ or $\F_{p^f}((t))$, for some $f\ge1$. Let $G$ be the
underlying abelian group of $k$. Note that either $k$ does not have positive
characteristic or $G$ is not (topologically) finitely generated. Consider the
two-dimensional $G$-representation $V = k^2$ given by
\[
G\longrightarrow \GL_2(k),\quad x\longmapsto
\begin{pmatrix}1&x\\0&1\end{pmatrix}.
\]
We obtain a short exact sequence $0\to k\to V\to k\to 0$, where the outer terms
are smooth but $V$ is not. Equivalently, the corresponding $1$-cocycle
$\chi\colon G\to k$, $\chi(x) = x$ is not continuous (because $k$ carries the
discrete topology whereas $G$ does not). Therefore, the map $\varphi^1$ above
is not surjective.
\end{ex} 

It is easy to see that $\varphi^1$ is always injective. Our main result is that
$\varphi^1$ is also surjective. More precisely, we prove:

\begin{thm}\label{thm:main} 
Let $V \in \Rep_k^\sm(G)$. Every $1$-cocycle $G\to V$ is automatically
continuous. In particular, the comparison map $\varphi^1$ is an isomorphism.
\end{thm}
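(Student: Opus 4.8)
\emph{Strategy.} I would prove that $c(G)$ spans a finite-dimensional $k$-subspace of $V$, and then finish using that a homomorphism from a topologically finitely generated pro-$p$ group into a $p$-power-torsion module factors through a finite quotient. Two preliminary reductions: writing $m=\prod_i \ell_i^{e_i}$, the decomposition $\Z/m\cong\prod_i\Z/\ell_i^{e_i}$ splits $k$, hence $V$ and any cocycle, so one may assume $k$ has characteristic $\ell^e$; and since an open subgroup of $G$ is again compact $p$-adic analytic, the restriction of a smooth representation stays smooth, and a $1$-cocycle vanishing on an open subgroup $U$ is automatically continuous (it is constant on each coset $gU$ by the cocycle identity), it suffices to prove the theorem after replacing $G$ by an open normal uniform pro-$p$ subgroup. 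The case $\ell\neq p$ is then easy (one has $\ell^e\in\Z_p^\times$), so assume $\ell=p$; thus $G$ is uniform pro-$p$ of some dimension $d$ and $k$ has characteristic $p^e$.

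\emph{Key Lemma: a $1$-cocycle $G\to W$ with $W\in\Rep_k^{\sm}(G)$ that vanishes on a dense subgroup vanishes identically.} Since topologically generating the uniform group $G$ is an open condition on $d$-tuples, the dense subgroup contains elements $\gamma_1,\dots,\gamma_d$ that topologically generate $G$; by the theory of uniform pro-$p$ groups every element of $G$ is uniquely $\gamma_1^{\lambda_1}\cdots\gamma_d^{\lambda_d}$ with $\lambda_i\in\Z_p$, so, applying the cocycle identity repeatedly, it is enough to show that the cocycle $c$ vanishes on each procyclic subgroup $\overline{\langle\gamma_i\rangle}$. As $c(\gamma_i)=0$ this reduces to the case $G=\Z_p$ with $c$ vanishing on the dense subgroup $\Z$, and here commutativity is decisive: for $x\in\Z_p$ and $a\in\Z$ one has $c(x)=c(a+x)=\sigma_a c(x)$ (writing $\sigma_a$ for the action of $a$), so $c(x)$ is fixed by $\Z$ and hence, its stabiliser being open, by $\overline{\Z}=\Z_p$; thus $c$ takes values in the trivial subrepresentation $W^G$ and is therefore a group homomorphism $\Z_p\to W^G$. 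Since $W^G$ is killed by $p^e$, this homomorphism kills $p^e\Z_p$, hence factors through the finite group $\Z_p/p^e\Z_p$, hence is continuous; being continuous and zero on the dense subgroup $\Z$, it is zero.

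\emph{Conclusion.} Let now $c\colon G\to V$ be an arbitrary $1$-cocycle. Pick topological generators $g_1,\dots,g_d$ of $G$ and let $W_0\subseteq V$ be the subrepresentation generated by $c(g_1),\dots,c(g_d)$; smoothness of $V$ and compactness of $G$ make each orbit $G\cdot c(g_i)$ finite, so $W_0$ is finite-dimensional over $k$. By the cocycle identity $c$ maps the dense subgroup $\langle g_1,\dots,g_d\rangle$ into $W_0$, so the induced cocycle $G\to V/W_0$ vanishes there, and the Key Lemma gives $c(G)\subseteq W_0$. The kernel $H$ of the action of $G$ on the finite-dimensional smooth module $W_0$ is then open, and $c|_H\colon H\to W_0$ is a group homomorphism into a module killed by $p^e$; it therefore annihilates the subgroup $H^{p^e}[H,H]$, which has finite index in $H$ (as $H^{\mathrm{ab}}$ is a finitely generated $\Z_p$-module) and hence is open by Serre's theorem that finite-index subgroups of topologically finitely generated pro-$p$ groups are open. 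So $c$ vanishes on an open subgroup and is continuous; together with the (easy) injectivity of $\varphi^1$ this gives the stated isomorphism. I expect the Key Lemma in the non-cyclic case to be the main obstacle: one is forced to propagate vanishing along a coordinate decomposition, which is why one must first descend to uniform groups and also arrange the generators $\gamma_i$ to lie in the prescribed dense subgroup.
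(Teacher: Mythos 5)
Your proof is correct, and its engine is the same as the paper's: the decomposition of a uniform pro-$p$ group as a product of procyclic subgroups (property \ref{powerful-F}), the computation on an abelian piece that $\sigma_a c(x)=c(ax)-c(a)=c(xa)-c(a)=c(x)+\sigma_x c(a)-c(a)$, which together with smoothness forces cocycle values into invariants, and the final passage from a group homomorphism into an $m$-torsion module to vanishing on an open subgroup. The scaffolding, however, is genuinely different. The paper proves the full abelian case as a lemma, showing directly that $\chi(G)\subseteq V^U$ for a suitable open $U$ so that $\chi|_U$ becomes a homomorphism killed on the open subgroup $U^{p^2m}$, and then glues along the procyclic factors. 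You instead (i) trap $c(G)$ inside a finite-dimensional subrepresentation $W_0$ by passing to the quotient $V/W_0$, (ii) package the abelian input as a ``dense vanishing'' lemma, correctly observing that the topological generators may be chosen inside the dense subgroup because generation is detected modulo the open Frattini subgroup, and (iii) finish by the openness of $H^{p^e}[H,H]$ in the finitely generated pro-$p$ group $H=\Ker(G\to\GL(W_0))$. This buys the intrinsically interesting intermediate statement that every cocycle has finite-dimensional span, at the cost of invoking the Frattini/Serre machinery where the paper only needs that $U^{p^2m}$ is open in an abelian $U$; note that your parenthetical justification of the finite index of $H^{p^e}[H,H]$ quietly identifies the abstract with the topological abelianization --- the clean route is to iterate the fact that $\Phi(H)=H^p[H,H]$ is already open for finitely generated pro-$p$ groups. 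Two further cosmetic points: the case $\ell\neq p$ is not obviously ``easy'' for cocycles (as opposed to homomorphisms), but your argument covers it verbatim, since then $H^{\ell^e}[H,H]$ is again open, so the case split is unnecessary; and the CRT splitting of $k$ is likewise optional, as the torsion argument works for any exponent $m$.
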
 

If $G$ is a finitely generated pro-$p$ group acting trivially on $V$,
Theorem~\ref{thm:main} follows from the general result, due to Serre
\cite[\S4.2, Exercise~6]{Serre.2013}, that every group homomorphism $\chi\colon
G\to V$ is continuous. 

\section{Recollection of Powerful pro-\texorpdfstring{$p$}{p} Groups} 
It follows from \cite[8.1~Theorem]{DDMS99} that a topological group $G$ is
$p$-adic analytic if and only if $G$ contains an open subgroup which is a
finitely generated powerful pro-$p$ group. We collect here some properties of
powerful pro-$p$ groups following \cite{DDMS99}.

Recall that a pro-$p$ group
$G$ is called \emph{powerful} if $p$ is odd and $G/\ol{G^p}$ is abelian, or
$p=2$ and $G/\ol{G^4}$ is abelian. Here, the bar
denotes topological closure and $G^n$, $n\in\Z_{>0}$, denotes the subgroup of
$G$ generated by the subset $\set{g^n}{g\in G}$.
The group $G$ is called \emph{uniformly powerful} if it is (topologically)
finitely generated, powerful, and torsionfree.

Fix a finitely generated, powerful pro-$p$ group $G$. We collect some properties:
\begin{enumerate}[label=(\Alph*)] 
\item\label{powerful-A} The subgroup $G^p = \ol{[G,G]G^p}$ is open. If $p=2$,
then $G^4$ is open \cite[Lem.~3.4]{DDMS99}.

\item\label{powerful-B} Writing $G = \ol{\langle a_1,\dotsc,a_d\rangle}$, we
have $G = \ol{\langle a_1\rangle}\dotsm \ol{\langle a_d\rangle}$, \ie $G$
is a product of procyclic subgroups \cite[Prop.~3.7]{DDMS99}.

\item\label{powerful-C} Every closed subgroup $H$ of $G$ is again a finitely
generated pro-$p$ group, and the cardinality of a minimal generating
set for $H$ is bounded above by the one for $G$ \cite[Thm.~3.8]{DDMS99}.

\item\label{powerful-D} Let $H$ be a finitely generated pro-$p$ group, and
suppose there exists $r\in \Z_{>0}$ such that every open normal subgroup of $H$
can be generated by at most $r$ elements. Then $H$ has a powerful,
characteristic, open subgroup \cite[Thm.~3.10]{DDMS99}.

\item\label{powerful-E} $G$ contains a characteristic, open, uniformly powerful
subgroup \cite[Cor.~4.3 and Thm.~3.13]{DDMS99}.

\item\label{powerful-F} If $G$ is uniformly powerful and $\{a_1,\dotsc,a_d\}$ is
a minimal generating set, then the mapping
\[
\Z_p^d \longrightarrow G,\quad (\lambda_1,\dotsc,\lambda_d) \longmapsto
a_1^{\lambda_1}\dotsm a_d^{\lambda_d}
\]
is a well-defined homeomorphism (but \emph{not} a group homomorphism)
\cite[Thm.~4.9]{DDMS99}.
\end{enumerate} 

It follows from properties \ref{powerful-C}, \ref{powerful-D}, and
$\ref{powerful-E}$ above that $G$ admits a fundamental basis $\Omega(G)$ of open
neighborhoods of $1$ consisting of uniformly powerful, normal subgroups.

\section{Proof of the Theorem} 
Recall that we assumed that $G$ is a compact $p$-adic analytic group. We first
prove a special case of Theorem~\ref{thm:main}

\begin{lem}\label{lem:abelian} 
Assume that $G$ is abelian. Let $V\in \Rep_k^\sm(G)$. Every $1$-cocycle $G\to
V$ is automatically continuous.
\end{lem}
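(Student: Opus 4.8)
The plan is to reduce to the case of a $1$-cocycle on a uniformly powerful abelian group and then exploit the explicit parametrization in property~\ref{powerful-F}.

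The plan is to produce an open subgroup of $G$ on which the given $1$-cocycle $c$ vanishes identically. Since $c(gu)=c(g)+g\cdot c(u)$ and $V$ is discrete, such a vanishing forces $c$ to be constant on every coset $gU$, hence locally constant, hence continuous; so this suffices.

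First I would reduce to the case $G=\Z_p^d$. By \ref{powerful-E} the abelian group $G$ has an open uniformly powerful subgroup $G_0$, and for such a group the bijection of \ref{powerful-F} is a group isomorphism $\Z_p^d\xrightarrow{\ \sim\ }G_0$ because $G_0$ is abelian. An open subgroup of $G_0$ is open in $G$, and $V$ stays smooth as a $G_0$-representation, so it is enough to treat $G=\Z_p^d$; write $e_1,\dots,e_d$ for the standard topological generators.

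The crux is to show that, after passing to the open subgroup $G':=p^NG$ with $N$ large enough, $c$ takes values in the invariant submodule $V^{G'}$. Choose $N$ so that $c(e_1),\dots,c(e_d)\in W:=V^{p^NG}$. Since $p^NG$ is normal in $G$, the submodule $W$ is $G$-stable, so $u_i:=c(p^Ne_i)=\sum_{j=0}^{p^N-1}e_i^j\cdot c(e_i)$ again lies in $W$, where it is fixed by $p^NG$. Telescoping the cocycle identity then identifies $c$ restricted to the dense subgroup $D:=\bigoplus_i\Z(p^Ne_i)$ of $G'$ with the homomorphism $\sum_i m_ip^Ne_i\mapsto\sum_i m_iu_i$. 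Finally, for arbitrary $a\in G'$ and $b\in D$, expanding both sides of $c(a+b)=c(b+a)$ by the cocycle identity and using that $p^NG$ fixes every $u_i$ collapses everything to $c(a)=b\cdot c(a)$; as $b$ ranges over the dense subgroup $D$ and $\mathrm{Stab}(c(a))$ is closed (stabilizers are open by smoothness), $c(a)$ is fixed by $G'$, \ie $c(a)\in V^{G'}$.

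The rest is immediate. Since $G'$ acts trivially on $V^{G'}$, the map $c|_{G'}\colon G'\to V^{G'}$ is a group homomorphism into a $k$-module, which is annihilated by the characteristic $m$ of $k$; hence $c|_{G'}$ kills $mG'$. Writing $m=p^sm'$ with $m'$ a unit in $\Z_p$ we get $mG'=p^sG'$, which is open in $G'\cong\Z_p^d$, so $c$ vanishes on an open subgroup of $G$ and we are done. (One could invoke Serre's exercise at this last step, but with $m$-torsion coefficients it is even more elementary.) The genuine obstacle, I expect, is exactly the step $c(G')\subseteq V^{G'}$: Example~\ref{ex} warns that a $1$-cocycle need not be continuous, so no density argument is available---knowing $c$ on $D$ does not determine $c$ on $G'$---and this detour is precisely what lets smoothness re-enter, through closed stabilizers, and force continuity.
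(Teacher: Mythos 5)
Your proof is correct and follows essentially the same route as the paper's: both first show that the cocycle lands in $V^{U}$ for a suitable open subgroup $U$ (using smoothness to handle a dense finitely generated subgroup, then the commutativity identity $\chi(gu)=\chi(ug)$ combined with open-hence-closed stabilizers and density), and then observe that the resulting homomorphism $U\to V^{U}$ kills an open subgroup because $mV=\{0\}$. The only difference is presentational: you coordinatize via \ref{powerful-E} and \ref{powerful-F} to reduce to $\Z_p^d$, whereas the paper argues directly with a dense finitely generated subgroup of $G$ and the open subgroup $U^{p^2m}$ obtained from \ref{powerful-A}.
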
 
\begin{proof} 
Let $V\in \Rep_k^\sm(G)$ and let $\chi\colon G\to V$ be a $1$-cocycle. We argue
that it suffices to find an open pro-$p$ subgroup $U\le G$ with $\chi(G)
\subseteq V^U$. Given such $U$ the map $\chi\big|_U\colon U\to V^U$ is a group
homomorphism. Note that $U$ is powerful, since it is abelian, and finitely
generated, since $G$ is $p$-adic analytic. Using \ref{powerful-A}, one easily
verifies that the subgroup $U^{p^2m}$ is open in $U$. From $m.V = \{0\}$ it
follows that $\chi$ vanishes on $U^{p^2m}$. The cocycle condition then
ensures that $\chi$ is locally constant and hence continuous.

For each $g\in G$ there exists a compact open subgroup $K_g\le G$ with
$\chi(g)\in V^{K_g}$.
As $G$ is finitely generated, there exist $a_1,\dotsc,a_d$ in $G$ generating a
dense subgroup; denote it by $A$. Let $U\in \Omega(G)$ be a subgroup contained
in $\bigcap_{i=1}^dK_{a_i}$. Then $\chi(a_i) \in V^U$ for all
$i$. Given $g,h\in G$ with $\chi(g), \chi(h)\in V^U$, we have
\[
\chi(gh^{-1}) = \chi(g) - (gh^{-1})\chi(h) \in V^U,
\]
because $G$ normalizes $U$ and hence acts on $V^U$. In particular, $\chi(A)
\subseteq V^U$. As noted at the beginning of the proof, we are done once we show
$\chi(G) \subseteq V^U$.

Fix any $u\in U$. For each $g\in K_u$ we compute, using that $G$ is abelian,
\[
u.\chi(g) = \chi(ug) - \chi(u) = \chi(gu) - \chi(u) = \chi(g) + g.\chi(u) -
\chi(u) = \chi(g),
\]
\ie $\chi(K_u) \subseteq V^u$. As $K_u.A = G$ and $\chi(A)\subseteq
V^U\subseteq V^u$, we conclude $\chi(G)\subseteq V^u$. As $u$ was arbitrary, we
deduce $\chi(G)\subseteq \bigcap_{u\in U}V^u = V^U$. This finishes the proof.
\end{proof} 

\begin{proof}[Proof of Theorem~\ref{thm:main}] 
Let $\chi\colon G\to V$ be a cocycle. Write $G = \ol{\langle
a_1,\dotsc,a_d\rangle}$ such that $d$ is minimal. Then property~\ref{powerful-F}
implies that the multiplication map induces a homeomorphism
\begin{equation}\label{eq:homeo}
H_1\times\dotsb \times H_d\xrightarrow{\cong} G,
\end{equation}
where $H_i = \ol{\langle a_i\rangle} \cong \Z_p$, $i=1,\dotsc,d$. Denote by
$W_i$ the $H_i$-representation obtained from $V$ via restriction.
Lemma~\ref{lem:abelian} shows that $\chi\big|_{H_i}\colon H_i\to W_i$ is
continuous. In particular, there exists an open subgroup $U_i$ of $H_i$ with
$\chi(U_i) = \{0\}$. It follows from \eqref{eq:homeo} that the subset $U_1\dotsm
U_d$ of $G$ is an open neighborhood of $1$, hence it contains an open normal
subgroup $U$ of $G$. It follows from the cocycle property that $\chi(U) =
\{0\}$ and further that $\chi\colon G\to V$ is locally constant, that is,
continuous. 
\end{proof} 
\section{Application to the Smooth Part Functor} 
Let $k$ be a field of positive characteristic. Fix a $p$-adic analytic group
$G$.\footnote{Here, $p$ need not be the characteristic of $k$.} For example,
$G$ could be the group of $F$-points of a connected reductive $F$-group, where
$F$ is a finite field extension of $\Q_p$.

Given any $V\in \Rep_k(G)$, the $k$-vector space
\[
V^\sm\coloneqq \bigcup_{K\le G} V^K \subseteq V,
\]
where $V^K\coloneqq \set{v\in V}{\text{$xv=v$ for all $x\in K$}}$, $K$ runs
through the compact open subgroups of $G$, is the largest smooth
subrepresentation of $G$ contained in $V$. This yields a functor
\[
\smooth\colon \Rep_k(G)\longrightarrow \Rep_k^\sm(G).
\]
Given $V\in \Rep_k(G)$, we can also write $V^\sm = \varinjlim_{K\le G} V^K$,
where $K$ runs through the compact open subgroups of $G$. As filtered colimits
are exact, the underlying $k$-vector spaces of the right derived functors of
$\smooth$ can be computed as
\[
\RR^i\smooth(V) = \varinjlim_{K\le G} H_{\abs}^i(K,V) = \varinjlim_{K\le G}
\Ext^i_{\Rep_k(G)}(\ind_K^G(1), V),\quad\text{for all $i\ge0$.}
\]
About $\RR^1\smooth$ we have the following result:

\begin{thm}\label{thm:RR1} 
Let $V\in \Rep_k^\sm(G)$. Then $\RR^1\smooth(V) = \{0\}$.
\end{thm}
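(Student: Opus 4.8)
The plan is to run directly off the formula recalled just above, namely $\RR^1\smooth(V)=\varinjlim_{K\le G}H^1_{\abs}(K,V)$, where $K$ ranges over the compact open subgroups of $G$ and the transition maps of this filtered colimit are the restriction homomorphisms $H^1_{\abs}(K,V)\to H^1_{\abs}(K',V)$ associated to inclusions $K'\le K$ (this is exactly what the Shapiro-type identification $H^i_{\abs}(K,V)=\Ext^i_{\Rep_k(G)}(\ind_K^G(1),V)$ and the projections $\ind_{K'}^G(1)\to\ind_K^G(1)$ encode). The first observation is that every compact open subgroup $K\le G$ is again a compact $p$-adic analytic group, and that the restriction $V|_K$ is a smooth $K$-representation, since stabilizers in $K$ are intersections with $K$ of the (open) stabilizers in $G$. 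Hence Theorem~\ref{thm:main} applies to $K$: every $1$-cocycle $\chi\colon K\to V$ is continuous.

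Next I would record the elementary consequence, already contained in the proof of Theorem~\ref{thm:main}, that such a continuous cocycle vanishes on an open subgroup. Concretely, a continuous cocycle $\chi\colon K\to V$ into the discrete module $V$ is locally constant, and together with $\chi(1)=0$ and the cocycle identity this forces $\chi$ to vanish on some open (normal) subgroup $U\le K$. In particular the restricted cocycle $\chi|_U$ is identically zero, so the class $[\chi]\in H^1_{\abs}(K,V)$ is annihilated by the restriction map $H^1_{\abs}(K,V)\to H^1_{\abs}(U,V)$.

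Finally I would invoke the defining property of filtered colimits to conclude. An arbitrary element of $\RR^1\smooth(V)=\varinjlim_{K}H^1_{\abs}(K,V)$ is represented by some class $[\chi]\in H^1_{\abs}(K,V)$ for a compact open $K\le G$; choosing $U\le K$ open with $\chi|_U=0$ as above, the subgroup $U$ is again compact open in $G$, so it occurs in the indexing system, and the transition map from the $K$-stage to the $U$-stage — restriction — sends $[\chi]$ to $0$. Therefore the given element is already zero in the colimit, whence $\RR^1\smooth(V)=\{0\}$.

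I do not expect a genuine obstacle here: the entire arithmetic content sits in Theorem~\ref{thm:main}, and the remaining argument is a formal manipulation of a filtered colimit. The only points that deserve a careful sentence are (i) that the transition maps of the colimit $\varinjlim_K H^1_{\abs}(K,V)$ really are the restriction maps, and (ii) that a compact open subgroup of a $p$-adic analytic group is still a compact $p$-adic analytic group carrying a smooth restricted representation, so that Theorem~\ref{thm:main} is legitimately applicable at every stage.
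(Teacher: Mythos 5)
Your proof is correct and takes essentially the same route as the paper: the paper applies Theorem~\ref{thm:main} to rewrite $\varinjlim_K H^1_{\abs}(K,V)$ as $\varinjlim_K H^1_{\cont}(K,V)$ and then invokes the standard vanishing of that colimit, which is precisely the ``a continuous cocycle into a discrete module vanishes on an open subgroup, hence dies under restriction in the filtered colimit'' argument you spell out. The only difference is that you make this last vanishing step explicit rather than citing it.
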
 
\begin{proof} 
Applying Theorem~\ref{thm:main}, we deduce
\[
\RR^1\smooth(V) = \varinjlim_{K\le G}H_{\abs}^1(K,V) = \varinjlim_{K\le G}
H_{\cont}^1(K,V) = \{0\}.\qedhere
\]
\end{proof} 

\begin{cor}\label{cor:smoothkernel} 
Let $f\colon V\to W$ be a surjective morphism in $\Rep_k(G)$ such that $\Ker(f)$
is smooth. Then $f^\sm\colon V^\sm \to W^\sm$ is surjective.
\end{cor}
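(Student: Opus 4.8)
The plan is to deduce the corollary from Theorem~\ref{thm:RR1} by means of the long exact sequence of the derived functors of $\smooth$. First I would use the surjectivity of $f$ to form the short exact sequence
\[
0\longrightarrow \Ker(f)\longrightarrow V\xrightarrow{\;f\;} W\longrightarrow 0
\]
in $\Rep_k(G)$. Since $\smooth$ is right adjoint to the exact, fully faithful inclusion $\Rep_k^\sm(G)\hookrightarrow\Rep_k(G)$, it is left exact; and since $\Rep_k(G)$ has enough injectives, the right derived functors $\RR^i\smooth$ are defined and the displayed sequence gives rise to a long exact sequence beginning
\[
0\longrightarrow \Ker(f)^\sm\longrightarrow V^\sm\xrightarrow{\;f^\sm\;} W^\sm\longrightarrow \RR^1\smooth\bigl(\Ker(f)\bigr)\longrightarrow\dotsb .
\]

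Next I would bring in the hypothesis that $\Ker(f)$ is smooth: Theorem~\ref{thm:RR1} then yields $\RR^1\smooth(\Ker(f))=\{0\}$, and exactness of the sequence above at $W^\sm$ forces $f^\sm\colon V^\sm\to W^\sm$ to be surjective. That completes the argument.

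I do not expect a genuine obstacle here: the only thing to check is the routine formalism (left exactness of $\smooth$, existence of the long exact sequence), and all the substance is already contained in Theorem~\ref{thm:main} via Theorem~\ref{thm:RR1}. Should one wish to bypass derived functors entirely, the same input suffices directly: given $w\in W^\sm$, pick a compact open subgroup $K\le G$ with $w\in W^K$ and a preimage $v\in V$ of $w$; then $\kappa\mapsto\kappa v-v$ is a $1$-cocycle $K\to\Ker(f)$ for the restricted action, which by Theorem~\ref{thm:main} applied to the compact $p$-adic analytic group $K$ (noting that $\Ker(f)$ is smooth) is continuous, hence locally constant, hence vanishes on some open subgroup $K'\le K$; thus $v\in V^{K'}\subseteq V^\sm$ maps to $w$.
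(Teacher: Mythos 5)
Your main argument is exactly the paper's proof: apply the long exact sequence of $\RR^\bullet\smooth$ to $0\to\Ker(f)\to V\to W\to 0$ and invoke Theorem~\ref{thm:RR1} to kill $\RR^1\smooth(\Ker(f))$. The direct cocycle argument you sketch at the end is also correct, but it is just an unwinding of the same input (Theorem~\ref{thm:main}), so there is nothing essentially different here.
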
 
\begin{proof} 
The short exact sequence $0\to \Ker(f)\to V\to W\to 0$ yields an exact
sequence $V^\sm \xrightarrow{f^\sm} W^\sm \to \RR^1\smooth
\bigl(\Ker(f)\bigr) = \{0\}$. Hence, $f^\sm$ is surjective.
\end{proof} 

\begin{cor}\label{cor:torsiontheory} 
Let $V\in \Rep_k(G)$. Then $(V/V^\sm)^\sm = \{0\}$.
\end{cor}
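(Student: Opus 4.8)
The plan is to read this off from Corollary~\ref{cor:smoothkernel}. Consider the canonical projection $\pi\colon V\to V/V^\sm$ in $\Rep_k(G)$. It is surjective, and its kernel is precisely $V^\sm$, which is a smooth representation. Hence Corollary~\ref{cor:smoothkernel} applies and shows that the induced map $\pi^\sm\colon V^\sm\to (V/V^\sm)^\sm$ is surjective. On the other hand, $\pi^\sm$ is, by the very definition of the functor $\smooth$ on morphisms, just the restriction of $\pi$ to the subrepresentation $V^\sm\subseteq V$; since $V^\sm=\Ker(\pi)$, this restriction is the zero map. A map that is simultaneously zero and surjective forces its target to vanish, so $(V/V^\sm)^\sm=\{0\}$.

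If one prefers to avoid Corollary~\ref{cor:smoothkernel}, the statement can also be obtained directly from Theorem~\ref{thm:main}. Take any $\bar v\in (V/V^\sm)^\sm$, choose a compact open subgroup $K\le G$ with $\bar v\in (V/V^\sm)^K$, and lift $\bar v$ to some $v\in V$. The condition $x\bar v=\bar v$ for all $x\in K$ says exactly that $c\colon K\to V^\sm$, $c(x)\coloneqq xv-v$, is well defined, and a short computation shows $c$ is a $1$-cocycle for the (smooth) $K$-representation $V^\sm$. Since $K$ is a compact $p$-adic analytic group, Theorem~\ref{thm:main} makes $c$ continuous, hence locally constant, hence identically zero on some open subgroup $K'\le K$. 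Then $v\in V^{K'}\subseteq V^\sm$, so $\bar v=0$.

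In either approach there is no real obstacle: all the genuine input has already been spent in Theorem~\ref{thm:main} (and its consequence Corollary~\ref{cor:smoothkernel}). What remains — identifying $V^\sm$ with $\Ker(\pi)$, checking the cocycle identity, and turning local constancy of $c$ into vanishing on a small open subgroup — is purely formal bookkeeping. If I had to single out a delicate point, it is only the mild one that a compact open subgroup $K$ of $G$ is again a compact $p$-adic analytic group, so that Theorem~\ref{thm:main} is legitimately applicable to the restriction of $V^\sm$ to $K$.
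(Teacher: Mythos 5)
Your first argument is exactly the paper's proof: apply Corollary~\ref{cor:smoothkernel} to the quotient map $V\to V/V^\sm$, whose kernel $V^\sm$ is smooth, and note that the induced surjection $V^\sm\to (V/V^\sm)^\sm$ is the zero map, forcing the target to vanish. Your alternative direct argument via Theorem~\ref{thm:main} is also correct (it simply unwinds Corollary~\ref{cor:smoothkernel} in this special case), so there is nothing to fix.
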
 
\begin{proof} 
The quotient map $V\to V/V^\sm$ induces the zero map $V^\sm \to
(V/V^\sm)^\sm$ which, by Corollary~\ref{cor:smoothkernel}, is surjective.
\end{proof} 

\begin{cor}\label{cor:smooth-extension} 
Let $0\to V_1\to V_2\to V_3\to 0$ be a short exact sequence in $\Rep_k(G)$. If
$V_1$ and $V_3$ are smooth, then so is $V_2$.
\end{cor}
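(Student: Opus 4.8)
The plan is to prove directly that the canonical inclusion $V_2^\sm \hookrightarrow V_2$ is an equality, using Corollary~\ref{cor:smoothkernel}. First I would record two observations. Since $V_1$ is a smooth subrepresentation of $V_2$ and $V_2^\sm$ is the \emph{largest} smooth subrepresentation, we get $V_1 = V_1^\sm \subseteq V_2^\sm$. Second, writing $f\colon V_2 \twoheadrightarrow V_3$ for the given surjection, its kernel is $V_1$, which is smooth by hypothesis; so Corollary~\ref{cor:smoothkernel} applies and tells us that $f^\sm\colon V_2^\sm \to V_3^\sm = V_3$ is surjective (here $V_3^\sm = V_3$ because $V_3$ is smooth).

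With these in hand the conclusion is a one-line element chase. Given $v\in V_2$, surjectivity of $f^\sm$ yields $w\in V_2^\sm$ with $f(w) = f(v)$; then $v - w \in \Ker(f) = V_1 \subseteq V_2^\sm$, so $v = w + (v-w) \in V_2^\sm$. Hence $V_2 \subseteq V_2^\sm$, i.e. $V_2 = V_2^\sm$, which is exactly the statement that $V_2$ is smooth. If one prefers to avoid elements, the same argument reads: the composite $V_2^\sm \hookrightarrow V_2 \twoheadrightarrow V_3$ is surjective with kernel $V_2^\sm \cap V_1 = V_1$, so the induced monomorphism $V_2^\sm/V_1 \to V_2/V_1$ is also an epimorphism, forcing $V_2^\sm = V_2$.

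I do not expect any genuine obstacle here: the entire weight of the argument has already been carried by Corollary~\ref{cor:smoothkernel} (and, behind it, Theorem~\ref{thm:RR1} and Theorem~\ref{thm:main}). The only point that requires a moment's care is the inclusion $V_1 \subseteq V_2^\sm$, but this is immediate from the defining universal property of the smooth part functor $\smooth$.
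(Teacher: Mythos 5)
Your proof is correct and is essentially the paper's own argument: both reduce the statement to Corollary~\ref{cor:smoothkernel} applied to the surjection $V_2\twoheadrightarrow V_3$ with smooth kernel $V_1$, note $V_1\subseteq V_2^\sm$, and conclude $V_2^\sm=V_2$ by comparing the resulting short exact sequence $0\to V_1\to V_2^\sm\to V_3\to 0$ with the given one. You merely spell out the final element chase that the paper leaves implicit in the word ``whence.''
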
 
\begin{proof} 
By Corollary~\ref{cor:smoothkernel} the induced sequence $0\to V_1\to V_2^\sm
\to V_3\to 0$ is exact, whence $V_2 = V_2^\sm$.
\end{proof} 

We have seen in Example~\ref{ex} that Corollary~\ref{cor:smooth-extension} may
fail if either $G$ does not admit a finitely generated open subgroup or $k$ has
characteristic $0$.

\begin{cor} 
One has $\Ext^1_{\Rep_k^\sm(G)}(V,W) = \Ext^1_{\Rep_k(G)}(V,W)$, for all $V,W\in
\Rep_k^\sm(G)$.
\end{cor}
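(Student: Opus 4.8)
The plan is to prove that the natural comparison map
\[
\iota\colon \Ext^1_{\Rep_k^\sm(G)}(V,W)\longrightarrow \Ext^1_{\Rep_k(G)}(V,W),
\]
induced by the inclusion $\Rep_k^\sm(G)\hookrightarrow\Rep_k(G)$, is an isomorphism of $k$-modules. I would interpret both $\Ext^1$ groups via Yoneda, \ie as sets of equivalence classes of $1$-extensions equipped with the Baer sum; for $\Ext^1$ this agrees with the derived-functor description wherever the latter makes sense, and $\iota$ is then literally the map sending the class of an extension in the smooth category to its class in $\Rep_k(G)$. The inclusion functor is exact and fully faithful — kernels and cokernels of maps of smooth representations, formed in $\Rep_k(G)$, are again smooth — so $\iota$ is a well-defined homomorphism, and the content is to check that it is bijective.

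For surjectivity I would start from an arbitrary short exact sequence $0\to W\to E\to V\to 0$ in $\Rep_k(G)$ representing a class on the right-hand side. Since $W$ and $V$ are smooth, Corollary~\ref{cor:smooth-extension} forces $E$ to be smooth, so the sequence already lives in $\Rep_k^\sm(G)$ and visibly maps to the given class. This is the only step that uses genuine input.

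For injectivity I would invoke the standard fact that two $1$-extensions $0\to W\to E_i\to V\to 0$ $(i=1,2)$ define the same Yoneda class iff there is a morphism $E_1\to E_2$ compatible with the structure maps (which is then automatically an isomorphism by the short five lemma). If the $E_i$ are smooth and the extensions agree in $\Ext^1_{\Rep_k(G)}(V,W)$, such a compatible morphism exists in $\Rep_k(G)$, and fullness of $\Rep_k^\sm(G)$ upgrades it to a morphism in the smooth category; hence the extensions already agree in $\Ext^1_{\Rep_k^\sm(G)}(V,W)$. Finally one checks that $\iota$ respects the Baer sum, since Baer sums are built from pullbacks, pushouts and biproducts, all computed the same way in a full exact subcategory, so $\iota$ is in fact an isomorphism of $k$-modules. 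I do not expect a serious obstacle here: granting Corollary~\ref{cor:smooth-extension}, this is just the general principle that a full abelian subcategory closed under extensions has the same $\Ext^1$ as the ambient category, and the only point needing a little care is the bookkeeping around Yoneda equivalence and the short five lemma.
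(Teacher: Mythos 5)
Your proposal is correct and follows essentially the same route as the paper, which simply declares the corollary to be a restatement of Corollary~\ref{cor:smooth-extension}; you have merely filled in the standard Yoneda bookkeeping (surjectivity from that corollary, injectivity from fullness, compatibility with Baer sums) that the paper leaves implicit.
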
 
\begin{proof} 
This is a restatement of Corollary~\ref{cor:smooth-extension}.
\end{proof} 

\noindent\textbf{Acknowledgments.} 
This research was funded by the University of M\"unster and Germany's Excellence
Strategy EXC 2044 390685587, Mathematics M\"unster:
Dynamics--Geometry--Structure. I thank Emanuele Bodon for providing
Example\ref{ex}. My thanks extends to Peter Schneider for his interest in my
work. 

\bibliographystyle{alphaurl}
\bibliography{references}{}
\end{document}